\documentclass[reqno,12pt,centertags]{amsart}

\usepackage{amsmath, amsthm, amscd, amssymb, latexsym, upref}

\usepackage{cite}
\input epsf
\usepackage{epsfig}
\usepackage{epic, eepicemu}
\usepackage{tikz}
\usepackage{mathrsfs}  

\newcommand{\bbB}{{\mathbb{B}}}
\newcommand{\bbC}{{\mathbb{C}}}
\newcommand{\bbN}{{\mathbb{N}}}
\newcommand{\bbR}{{\mathbb{R}}}
\newcommand{\bbZ}{{\mathbb{Z}}}

\newcommand{\cF}{{\mathcal{F}}}

\newcommand{\rmd}{{\mathrm{d}}}

\DeclareMathOperator{\imaginary}{Im}
\DeclareMathOperator{\real}{Re}
\renewcommand{\Re}{\real}
\renewcommand{\Im}{\imaginary}

\DeclareMathOperator{\tr}{tr}

\newcommand{\iop}{{\mathbf i}}


\allowdisplaybreaks 
\numberwithin{equation}{section}

\newtheorem{theorem}{Theorem}[section]
\newtheorem*{theorem-non}{Theorem}

\newtheorem{lemma}[theorem]{Lemma}
\newtheorem{proposition}[theorem]{Proposition}
\newtheorem{corollary}[theorem]{Corollary}

\theoremstyle{definition}

\newtheorem{remark}[theorem]{Remark}

\sloppy


\date{\today}
\title[Optimal Dispersion for Periodic Schr\"odinger Operators]{Optimal Dispersion for Discrete Periodic Schr\"odinger Operators}

\author[D. \ Damanik]{David Damanik}
\address{Department of Mathematics, Rice University, Houston, TX~77005, USA}
\email{ damanik@rice.edu}
\thanks{D.\ D.\ was supported in part by National Science Foundation grants DMS--2054752 and DMS--2349919}

\author[J.\ Fillman]{Jake Fillman}
\address{Department of Mathematics, Texas A\&M University, College Station, TX 77843,
USA}
\email{fillman@tamu.edu}
\thanks{J.\ F.\ was supported in part by National Science Foundation grant DMS--2513006.}

\author[G.\ Young]{Giorgio Young}
\address{Department of Mathematics,
  The University of Michigan,
 Ann Arbor, MI 48109, USA}
  \email[]{gfyoung@umich.edu}
  \thanks{G.\ Y.\  was supported by the National Science Foundation through grant DMS--2303363.}

\begin{document}

	\begin{abstract}
	We prove a dispersive estimate for periodic discrete Schr\"odinger operators on the line with optimal rate of decay.
 Additionally, by standard methods, we deduce dispersive estimates for the discrete nonlinear Schr\"odinger equation with small initial data and suitable nonlinearity when the underlying Hamiltonian is periodic.
	\end{abstract}
	
	\maketitle
	
\section{Introduction}

\subsection{Setting} 
We consider discrete Schr\"odinger operators $H = H_V:\ell^2(\bbZ) \to \ell^2(\bbZ)$ given by
\begin{equation} \label{eq:DSOdef}
[H\psi](n)
= \psi(n-1) + \psi(n+1) + V(n)\psi(n),
\end{equation}
where the \emph{potential} $V:\bbZ \to \bbR$ is bounded.

Operators of the form \eqref{eq:DSOdef} arise naturally in the context of a tight-binding approximation, and have been studied extensively over the years.
We point the reader to the textbooks \cite{CFKS, DF2022ESO1, DF2024ESO2, Teschl2000Jacobi} for background.
One is interested in the associated quantum dynamics, which are driven by the time-dependent Schr\"odinger equation:
\begin{equation}\label{eq:timeschrod}
{\iop} \frac{\partial \psi}{\partial t} = H_V \psi.
\end{equation}

There is a close relationship between notions of wave-packet spread and the spectral type of $H_V$. Most classically, this relationship is expressed through the RAGE theorem \cite{enss1978asymptotic,ruelle1969remark,amrein1973characterization}, linking qualitative notions of wave-packet spread with the spectral type of $H_V$ and providing the heuristic that wave-packet spread increases with the continuity of the spectral measure. Further evidence for this general heuristic was provided by Last \cite{Last1996JFA} in the context of ballistic transport, a more quantitative notion of wave-packet spread characterized by linear growth of the expectation of the position observable in time. In particular, his results imply that in 1D one has ballistic transport in a \emph{time-averaged sense} when the spectral type is absolutely continuous, while there are sub-ballistic lower bounds growing at the rate $T^{\alpha}$ on the time-averaged expectation of the position observable, where $0\leq \alpha\leq 1$ is such that the associated spectral is not singular with respect to $\alpha$-dimensional Hausdorff measure.

An important open question in the field centers on whether there is a connection of this type between ballistic transport without time-averaging, and absolute continuity; compare discussion in \cite{Simon1990CMP}. When $V$ is periodic, one has very strong spectral continuity, and a correspondingly strong form of ballistic transport is known to hold \cite{AschKnauf1998Nonlin, BoutetSabri2023OTAA, DLY2015CMP, Fillman2021OTAA}. On the other hand, in some well-understood one-dimensional models with absolutely continuous spectrum, ballistic transport has been shown for genuinely aperiodic models, including quasi-periodic operators in the KAM region \cite{GeKachkovskiy2023CPAM, Kachkovskiy2016CMP, Zhao2016CMP, Zhao2017JDE}, limit-periodic operators in the exponential regime \cite{Fillman2017CMP, Young2023JST} studied by Pastur and Tkachenko \cite{PasturTkachenko1, PasturTkachenko2}  and Egorova \cite{Egorova1987}, as well as quasi-periodic operators in the subcritical region \cite{ZhaoZhang2017CMP}, where the methods explicitly make use of the continuity properties of the spectral measures. For further background on ballistic transport, we refer the reader to \cite{DamMalYou2024JST}.

Given the unitarity of the evolution, spreading can also  be measured with dispersive estimates, such as estimates on the norm of the operator $e^{-{\iop}tH_V}$ as a map $\ell^1(\bbZ) \to \ell^\infty(\bbZ)$. In addition to capturing the dispersive feature often expected of wave-packets, the global control of the wavepacket such an estimate provides precludes a portion of the wavepacket translating linearly like a traveling wavefront, with the bulk remaining localized, a picture consistent with ballistic transport. 

Despite its implication on wave-packet spread, the link between spectral type and dispersion is less clear, with dispersive estimates (after projecting away from eigenstates) often being proven in regimes where decay conditions on the potential preclude singular continuous spectrum, or the estimates are proven after projecting onto the absolutely continuous subspace, compare \cite{schlag}. In the discrete setting, \cite{pelinovsky2008spectral} shows a dispersive estimate at the sharp rate after projecting onto the absolutely continuous spectrum for generic potentials of rapid enough decay. In the bounded but non-decaying regime, there is a dispersive estimate for periodic continuum Schr\"odinger operators due to Cuccagna \cite{cuccagna2008dispersion}. Dispersive estimates were subsequently proved for discrete Schr\"odinger operators by Mi and Zhao first, in period two \cite{MZ2020} and then for general periods \cite{MZ2022}; however, for periods $p \geq 3$, the scaling behaves like $t^{-1/(p+1)}$, which gets weaker as $p$ grows. In the KAM regime of small analytic quasiperiodic potentials, work of Bambusi and Zhao \cite{BambusiZhao2020Advances} finds the first dispersive estimate for aperiodic almost periodic Schr\"odinger operators.

\subsection{Results}

It is well known that solutions to \eqref{eq:timeschrod} with $V\equiv 0$ satisfy the following $\ell^1 \to \ell^\infty$ dispersive estimate, compare \cite{stefanov2005asymptotic, mielke2010dispersive}:
\begin{align} \label{eq:freeDispersion}
\|e^{-{\iop}tH_0}\psi\|_\infty\lesssim \langle t\rangle^{-1/3}\|\psi\|_1 ,
\end{align}
 where we have denoted $\langle t\rangle=\sqrt{1+t^2}$. 
Our main result is a dispersive estimate at the free rate for solutions to \eqref{eq:timeschrod} with $V$ a periodic potential.

\begin{theorem}\label{thm:periodicdispersion}
If $V:\bbZ \to \bbR$ is periodic, there is a constant $M>0$ such that
\begin{align}\label{eq:dispersiveestimate}
\| e^{-{\iop}tH_V}\psi\|_\infty \leq M \langle t\rangle^{-1/3}\|\psi\|_1
\end{align}
for all $t \in \bbR$ and all $\psi \in \ell^1(\bbZ)$.
\end{theorem}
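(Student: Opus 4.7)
I would proceed via Floquet--Bloch theory followed by a carefully chosen stationary phase analysis. Let $p$ be the period of $V$. The Floquet transform provides a unitary equivalence of $H_V$ with the direct integral $\int_{BZ}^{\oplus} H_V(k)\,dk$ over the Brillouin zone $BZ = [-\pi/p, \pi/p)$, where $H_V(k)$ is a $p \times p$ matrix. Let $E_1(k) \leq \cdots \leq E_p(k)$ denote the Bloch bands with corresponding normalized Bloch amplitudes $u_{j,k}$. The propagator kernel can then be written, up to an overall constant, as
\begin{equation*}
e^{-\iop t H_V}(n,m)
= c\sum_{j=1}^p \int_{BZ} e^{-\iop t E_j(k) + \iop k(n-m)}\, u_{j,k}(n)\overline{u_{j,k}(m)}\, dk ,
\end{equation*}
and the target estimate \eqref{eq:dispersiveestimate} is equivalent to the uniform pointwise bound $|e^{-\iop t H_V}(n,m)| \lesssim \langle t\rangle^{-1/3}$ for all $n,m \in \bbZ$.

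Setting $\lambda = (n-m)/t$, the phase is $\phi_{j,\lambda}(k) = E_j(k) - \lambda k$, with $\partial_k^2 \phi_{j,\lambda} = E_j''(k)$ and $\partial_k^3 \phi_{j,\lambda} = E_j'''(k)$ independent of $\lambda$. I would therefore seek to control these two derivatives from below on a partition of $BZ$. The key analytic input is the following structural property of the Bloch bands: for every $j$ and every $k_0 \in BZ$, the pair $(E_j''(k_0), E_j'''(k_0))$ is not identically zero; equivalently, the zeros of $E_j''$ are simple. This is exactly the mechanism that produces the $t^{-1/3}$ rate in the free case \eqref{eq:freeDispersion}, where $E(k)=2\cos k$ and $E''$ vanishes at $k = \pm\pi/2$ while $E''' = \pm 2 \neq 0$.

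To establish this structural lemma, I would differentiate the implicit relation $\Delta(E_j(k)) = 2\cos(pk)$, where $\Delta$ is the Floquet discriminant (a polynomial of degree $p$ in $E$), and express $E_j', E_j'', E_j'''$ in terms of $\Delta', \Delta'', \Delta'''$ evaluated at $E_j(k)$ together with trigonometric functions of $pk$. At a putative interior point where $E_j''(k_0) = E_j'''(k_0) = 0$, these identities collapse to a polynomial relation in $E_j(k_0)$ that I would rule out using the general structure of $\Delta$. The behavior at band edges and at any band touchings occurring at $k = 0$ or $k = \pm\pi/p$ requires a separate treatment: there $E_j'(k_0) = 0$, and if two bands touch one should work with an analytic spectral projector onto the pair rather than with each band individually, so that both the eigenprojector and the composite dispersion relation remain smooth in $k$.

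With the structural lemma in hand, one may cover $BZ$ by finitely many closed subintervals on each of which either $|E_j''(k)| \geq \delta$ or $|E_j'''(k)| \geq \delta$ for some $\delta > 0$. On the former, van der Corput's lemma of order $2$ applied to the band integral yields decay of order $t^{-1/2}$; on the latter, van der Corput of order $3$ yields $t^{-1/3}$, and both are uniform in $\lambda$ since only $\partial_k^2 \phi_{j,\lambda}$ and $\partial_k^3 \phi_{j,\lambda}$ enter the estimates. Because the amplitude $u_{j,k}(n)\overline{u_{j,k}(m)}$ is $\ell^\infty$-bounded with uniformly bounded variation in $k$ (using analyticity of the spectral projector, suitably modified at band touchings), summing the contributions over finitely many intervals and over the $p$ bands gives the desired $\langle t\rangle^{-1/3}$ bound. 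The main obstacle, as I see it, is the structural lemma itself: the algebraic verification that $E_j''$ and $E_j'''$ cannot vanish simultaneously, together with the careful handling of band edges and band touchings, which together are what lift the earlier $t^{-1/(p+1)}$ estimate of Mi--Zhao to the optimal $t^{-1/3}$ rate.
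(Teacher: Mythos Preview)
Your overall architecture matches the paper's exactly: Floquet--Bloch decomposition, reduction to a uniform kernel bound, the structural claim that $E_j''$ and $E_j'''$ cannot vanish simultaneously on $[0,\pi/p]$, and then van der Corput of orders $2$ and $3$ on a finite partition of the Brillouin zone. You have also correctly identified that this structural lemma is the entire point and the only obstacle separating the known $t^{-1/(p+1)}$ bound from $t^{-1/3}$.

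The gap is in your proposed proof of that lemma. Differentiating $\Delta(E_j(k))=2\cos(pk)$ three times and setting $E_j''(k_0)=E_j'''(k_0)=0$ at an interior point yields, after eliminating the trigonometric factors via $(E_j')^2=p^2(4-\Delta^2)/(\Delta')^2$, the pair of polynomial identities
\[
\Delta''(\Delta^2-4)=\Delta(\Delta')^2, \qquad \Delta'''(\Delta^2-4)=(\Delta')^3
\]
at $E=E_j(k_0)$. These are precisely the conditions $\Theta''=\Theta^{(3)}=0$ for the Marchenko--Ostrovski map $\Theta$ (the inverse of the composite band function $E$), and there is no purely algebraic argument, based only on ``the general structure of $\Delta$'', that excludes them: $\Delta$ is a rather generic monic polynomial of degree $p$. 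The paper supplies the missing \emph{analytic} input. Since $\Theta:\bbC_+\to\bbC_+$ is Herglotz with boundary measure supported on $\bbR\setminus\Sigma$, one obtains for $x\in\mathring{\Sigma}$ the representation
\[
\Theta^{(3)}(x)=\frac{6}{\pi}\int_{\bbR\setminus\Sigma}\frac{\Im\Theta(\lambda)}{(\lambda-x)^4}\,\rmd\lambda>0,
\]
which immediately rules out $\Theta''=\Theta^{(3)}=0$, hence $E''=E'''=0$, at interior points. Open band edges are then handled by your differentiation idea (there $\Delta'\neq 0$ gives $E_j''(k_*)=\pm 2p^2/\Delta'\neq 0$), and closed gaps lie in the interior of a connected component of $\Sigma$, so the Herglotz argument already covers them---no separate spectral-projector treatment for band touchings is needed. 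In short, everything else in your plan is correct and matches the paper, but the positivity $\Theta^{(3)}>0$ via the Herglotz representation is the key idea you are missing.
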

\begin{remark}\mbox{\,}
\begin{enumerate}
\item This theorem contains and improves the results of Mi and Zhao, \cite{MZ2020,MZ2022}. 
In particular, \cite{MZ2020} shows dispersion  for $2$-periodic potentials at the established free rate \eqref{eq:freeDispersion}, and \cite{MZ2022} extends this work to show the dispersive rate of $\min\{\frac{1}{3},\frac{1}{p+1}\} $ for potentials of arbitrary period $p$. \medskip
\item 
The exponent of $1/3$ found in Theorem~\ref{thm:periodicdispersion} is the best known rate of dispersive decay estimate for the free discrete Laplacian. To the best of our knowledge, it is not known whether the exponent $1/3$ is sharp in the free case. Thus, “optimal” in the title of this work signifies that we are able to deduce a dispersive estimate at the best rate that is known for the free operator. See also the discussion in \cite{stefanov2005asymptotic}.
\medskip
\item The proof can be applied with minimal cosmetic changes to periodic \emph{Jacobi matrices}, that is, operators of the form
\begin{equation}
[J\psi](n)
= a(n-1)\psi(n-1) + b(n)\psi(n) + a(n) \psi(n+1)
\end{equation}
where $a(n)>0$, $b(n) \in \bbR$ are periodic.\medskip

\item By the Riesz--Thorin interpolation theorem, we may interpolate between
$$
\|e^{-{\iop}tH_V}\|_{\ell^1 \to \ell^\infty} \le M \langle t\rangle^{-1/3} \; \text{and} \; \|e^{-{\iop}tH_V}\|_{\ell^2 \to \ell^2} = 1
$$
to obtain
$$
\|e^{-{\iop}tH_V}\|_{\ell^p \to \ell^{p'}} \le \left( M \langle t\rangle^{-1/3} \right)^{\frac{2}{p} - 1}
$$
for $1 < p < 2$ (and $\frac{1}{p} + \frac{1}{p'} = 1$).
\end{enumerate}
\end{remark}

As in the earlier works \cite{MZ2020,MZ2022} (see also \cite{BambusiZhao2020Advances}), by standard methods, we may deduce from Theorem~\ref{thm:periodicdispersion} an $\ell^1 \to \ell^\infty$ dispersive decay estimate for the discrete nonlinear Schr\"odinger (NLS) equation:\footnote{Often in the physics literature, one works with operators of the form $-\Delta +V$, but it is easy to obtain Theorem~\ref{thm:periodicdispersion} for the other choice of sign by replacing $(t,V)$ by $(-t,-V)$.}
\begin{align}\label{eq:dnls}
{\iop}\frac{\partial\psi_n}{\partial t} = (H_V\psi)_n\pm |\psi_n|^{\sigma-1}\psi_n,\;n\in\bbZ.
\end{align}
In addition to being a natural discretization of the continuum NLS equation, the equation \eqref{eq:dnls} has found applications in the physics literature through the study of optical wave guides \cite{eisenberg2002optical,peschel2002optical}, photonic lattices \cite{efremidis2002discrete,sukhorukov2003spatial} and as a suitable model for Bose--Einstein condensates constrained by strong optical lattices \cite{cataliotti2001josephson,cataliotti2003superfluid}. We also refer the reader to the survey article \cite{kevrekidis2001discrete}. The following estimate follows quickly from Theorem~\ref{thm:periodicdispersion}, cf. \cite{BambusiZhao2020Advances}.\footnote{Mi and Zhao claim for $\sigma = p+3$, but this case seems to require a logarithmic correction.}

\begin{corollary}\label{cor:periodicNLS}
Let $\sigma> 5$ and suppose $V:\bbZ \to \bbR$ is periodic. There exist constants $C,\delta>0$ so that if $\|\psi(0)\|_1\leq \delta$, then $\psi(t)$,  the solution to the Cauchy problem for \eqref{eq:dnls} with initial datum $\psi(0)$, satisfies 
\begin{align}\label{eq:nlsdispersion}
\|\psi(t)\|_\infty\leq C\langle t\rangle^{-1/3}
\end{align}
for all $t \in \bbR$.
\end{corollary}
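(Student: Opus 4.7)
The plan is a standard Duhamel--bootstrap argument, using Theorem~\ref{thm:periodicdispersion} as the linear dispersive input and conservation of the $\ell^2$ norm as the key nonlinear ingredient. Because $H_V$ is bounded on every $\ell^p(\bbZ)$, Picard iteration gives local well-posedness of \eqref{eq:dnls} in $\ell^1$ with continuity of $t\mapsto \|\psi(t)\|_\infty$, and the Hamiltonian/gauge-invariant structure yields $\|\psi(t)\|_2 = \|\psi(0)\|_2 \le \|\psi(0)\|_1$ throughout the existence interval. Duhamel's formula reads
\begin{equation*}
\psi(t) = e^{-\iop t H_V}\psi(0) \mp \iop \int_0^t e^{-\iop(t-s)H_V}\bigl(|\psi(s)|^{\sigma-1}\psi(s)\bigr)\,ds,
\end{equation*}
and I would apply Theorem~\ref{thm:periodicdispersion} to both terms, using the pointwise identity
\begin{equation*}
\bigl\||\psi(s)|^{\sigma-1}\psi(s)\bigr\|_1 = \sum_n |\psi_n(s)|^{\sigma} \le \|\psi(s)\|_\infty^{\sigma-2}\|\psi(s)\|_2^2 \le \|\psi(0)\|_1^2\,\|\psi(s)\|_\infty^{\sigma-2},
\end{equation*}
to arrive at
\begin{equation*}
\|\psi(t)\|_\infty \le M\|\psi(0)\|_1 \langle t\rangle^{-1/3} + M\|\psi(0)\|_1^2 \int_0^t \langle t-s\rangle^{-1/3}\|\psi(s)\|_\infty^{\sigma-2}\,ds.
\end{equation*}

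Next, set $f(T) := \sup_{0 \le t \le T}\langle t\rangle^{1/3}\|\psi(t)\|_\infty$ and substitute $\|\psi(s)\|_\infty \le f(T)\langle s\rangle^{-1/3}$ into the above. The analytic heart of the argument is then the elementary convolution bound
\begin{equation*}
I(t) := \int_0^t \langle t-s\rangle^{-1/3}\langle s\rangle^{-(\sigma-2)/3}\,ds \le K\langle t\rangle^{-1/3},
\end{equation*}
which I would prove by splitting at $s = t/2$: on $[0,t/2]$ one has $\langle t-s\rangle^{-1/3}\lesssim \langle t\rangle^{-1/3}$ and the hypothesis $\sigma > 5$ makes $(\sigma-2)/3 > 1$, so the remaining integrand is integrable on $\bbR_{\ge 0}$; on $[t/2,t]$ one has $\langle s\rangle^{-(\sigma-2)/3}\lesssim \langle t\rangle^{-(\sigma-2)/3}$ and $\int_{t/2}^t \langle t-s\rangle^{-1/3}\,ds = O(t^{2/3})$, giving a contribution of order $\langle t\rangle^{(4-\sigma)/3}$, which is dominated by $\langle t\rangle^{-1/3}$ precisely because $\sigma > 5$. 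This reduces the Duhamel bound to the algebraic inequality
\begin{equation*}
f(T) \le M\|\psi(0)\|_1 + MK\|\psi(0)\|_1^2 f(T)^{\sigma-2}.
\end{equation*}

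To finish, I would run a standard continuity/bootstrap: choose $\delta$ so small that $C := 2M\delta$ satisfies $MKC^{\sigma-2}\delta^2 \le M\delta/2$, so that whenever $f(T) \le C$ the algebraic inequality improves to $f(T) \le 3M\delta/2 < C$. Since $f$ is continuous in $T$ with $f(0) = \|\psi(0)\|_\infty \le \delta < C$, this self-improvement propagates globally, yielding \eqref{eq:nlsdispersion} for $t\ge 0$; the case $t<0$ follows by time-reversal symmetry of \eqref{eq:dnls}. I do not anticipate a serious obstacle: the argument is textbook once Theorem~\ref{thm:periodicdispersion} is in hand. The only delicate point is the strict inequality $\sigma > 5$, which is exactly what is needed to avoid the borderline logarithmic factor appearing in the $[0,t/2]$ piece when $(\sigma-2)/3 = 1$, consistent with the footnote preceding the statement.
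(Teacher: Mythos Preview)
Your proposal is correct and is exactly the ``standard methods'' the paper has in mind: the paper does not supply an explicit proof of Corollary~\ref{cor:periodicNLS} at all, merely stating that it ``follows quickly from Theorem~\ref{thm:periodicdispersion}, cf.\ \cite{BambusiZhao2020Advances}.'' The Duhamel--bootstrap argument you outline, using $\ell^2$ conservation to control $\||\psi|^{\sigma-1}\psi\|_1$ and the strict inequality $(\sigma-2)/3>1$ to close the convolution estimate, is precisely the textbook route referenced there, including the observation (matching the paper's footnote) that the borderline $\sigma=5$ would introduce a logarithmic loss.
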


We note that in addition to improving the decay rate to the best established one, Corollary~\ref{cor:periodicNLS} expands the allowable nonlinearities of \cite[Corollary 1.1]{MZ2022} from $\sigma> \max\{5,p+3\}$ to $\sigma> 5$.

The rest of the paper is laid out as follows.
We recall some basic facts about periodic operators and prove some useful estimates on the Marchenko--Ostrovski mapping in Section~\ref{sec:MOmap}, which we then put to use in proving Theorem~\ref{thm:periodicdispersion} in Section~\ref{sec:PerDisp}. 

While this manuscript was being finished, we were informed that similar results for half-line operators were proved independently by different means by Kassem, Sagiv, and Weinstein \cite{KSWpreprint}. Our methods enable us to prove a global dispersive estimate for all periodic operators, whereas their methods allow them to prove global dispersive estimates for periodic operators obeying certain non-degeneracy conditions and local dispersive estimates for all periodic operators.

 \subsection*{Acknowledgements} We are grateful to the American Institute of Mathematics for hospitality during a recent SQuaRE program, at which some of this work was done.

\section{Analysis of the Marchenko--Ostrovski Mapping} \label{sec:MOmap}

To set notation, let us recall some important objects; see \cite[Section~7.2]{DF2024ESO2} for further background and proofs of relevant statements.
Given a discrete $p$-periodic Schr\"odinger operator $H = H_V$ and $z \in \bbC$, we consider the difference equation
\begin{equation} \label{eq:differenceEq}
u(n-1) + u(n+1) + V(n) u(n) = zu(n), \quad n \in \bbZ, \ u \in \bbC^\bbZ,
\end{equation}
which can be recast as a matrix recurrence, viz.:
\begin{equation}
\begin{bmatrix} u(n+1) \\ u(n) \end{bmatrix}
=
\begin{bmatrix} z-V(n) & -1 \\ 1 & 0 \end{bmatrix}
\begin{bmatrix} u(n) \\ u(n-1) \end{bmatrix}.
\end{equation}
This motivates one to define the monodromy matrix by
\begin{equation}
\Phi_V(z;p) = \begin{bmatrix} z - V(p) & -1 \\ 1 & 0 \end{bmatrix} \cdots 
\begin{bmatrix} z - V(1) & -1 \\ 1 & 0 \end{bmatrix}.
\end{equation}
Its trace $\Delta_V(z;p) := \tr \Phi_V(z;p)$ is called the \emph{Floquet discriminant} (or just the \emph{discriminant}).
When the period and potential are clear from context, we drop them from the notation and write simply $\Phi(z)$ and $\Delta(z)$.

For $k \in \bbR$, let  $H_V(k;p)$ denote the restriction of $H_V$ to the subspace $\{ \psi \in \bbC^\bbZ : \psi(n+p) \equiv e^{{\iop}pk} \psi(n)\}$.
Naturally, $H_V(k;p)$ is given by a $p \times p$ matrix; in a suitable basis:
\begin{equation} \label{eq:HkDef}
H_V(k;p) 
= 
\begin{cases}
V(1) + 2\cos(pk) & p=1, \\[2mm]
\begin{bmatrix}
V(1) & 1 + e^{-{\iop}pk} \\
1+e^{{\iop}pk} & V(2)
\end{bmatrix}  & p=2,
\\[6mm]
\begin{bmatrix}
V(1) & 1 &&& e^{-{\iop}pk} \\
1 & V(2) & 1 \\
& \ddots & \ddots & \ddots\\
&& 1 & V(p-1) & 1 \\
e^{{\iop}pk} &&& 1 & V(p)
\end{bmatrix}
& p \geq 3,
\end{cases}
\end{equation}
where unspecified entries in the case $p\geq 3$ are zero.
The eigenvalues (listed with multiplicity) of $H(k)$ will be denoted 
\begin{align*}
E_{V,1}(k;p) \leq \cdots \leq E_{V,p}(k;p),
\end{align*}
with corresponding normalized eigenvectors $\{v_{V,j}(k;p) : j=1,2,\ldots, p\}$.
As before, we suppress $V$ and $p$ when they are clear from context and simply write $H(k)$, $E_j(k)$, and $v_j(k)$.
Since $H(-k)$ is anti-unitarily equivalent to $H(k)$ via complex conjugation, we note that $E_j(-k) = E_j(k)$ for each $j$ and $k$.

Due to eigenvalue perturbation theory, we can (and do) choose the eigenvectors $\{v_j(k)\}$ in such a way that $v_j$ is continuous on $[0,\pi/p]$ and analytic on $(0,\pi/p)$ \cite{Kato}.
Furthermore, each $E_j$ also has an analytic extension through the endpoints $k_*=0,\pi/p$.
This is clear when $E_j$ is simple at the endpoint $k_*$, but one must be a bit careful in degenerate situations:
specifically, if $E_j(k_*)$ is doubly degenerate at $k_* \in \{0,\pi/p\}$, the analytic extension of $E_j$ might be given by one of the other band functions; compare the discussion on \cite[pp.\ 258--259]{Simon2011Szego}.
We write $[v_j(k)]_q = \langle v_j(k), e_q \rangle$ for the $q$th entry of $v_j(k)$; 
here and throughout the paper, $\langle \cdot,\cdot \rangle$ denotes the standard inner product that is antilinear in the second coordinate and $\{e_q: q=1,\ldots,p\}$ denotes the standard basis of $\bbC^p$.

The spectrum $\sigma(H)=\Sigma$ is then given by 
\begin{align*}
    \Sigma=\bigcup_{j=1}^p [\lambda_{2j-1},\lambda_{2j}],
\end{align*}
where $\lambda_{2j-1} = \min E_j(k)$ and $\lambda_{2j} = \max E_j(k)$.
The function $E_j$ is monotonic on $[0,\pi/p]$ and moreover
\begin{equation} \label{eq:monotonicityOfEj}
E_j \text{ is }
\begin{cases}
 \text{decreasing on } [0,\pi/p] & \text{if } p-j \text{ is even,} \\
 \text{increasing on } [0,\pi/p]  & \text{if } p-j \text{ is odd.}
 \end{cases}
\end{equation} 
Thus, we have $[\lambda_{2j-1},\lambda_{2j}] = [E_j(0), E_j(\pi/p)]$ if $p-j$ is odd and  $[E_j(\pi/p), E_j(0)]$ otherwise.

The quantities above are related by
\begin{equation}
\det(z-H(k))
= \Delta(z) - 2\cos(pk).
\end{equation}
In particular, $\Delta(E_j(k)) = 2\cos(pk)$ for any $j=1,2,\ldots, p$ and $k \in \bbR$.

Since $z \in \bbC$ belongs to $\Sigma$ if and only if $\Phi(z)$ has spectral radius one, there is an analytic function $\Theta:\bbC_+ \to \bbC_+:= \{z : \imaginary z >0\}$, called the Marchenko--Ostrovski mapping (see \cite[Section~10.10]{LukicBook}), such that $\Re(\Theta)\in [-\pi,0]$ and
\begin{equation}\label{eq:discriminantMO}
\Delta(z)  =2\cos(p\Theta(z)), \quad z \in \bbC_+.
\end{equation}

The Herglotz function $\Theta$ then satisfies
\begin{align} \label{eq:ThetaEinverse}
    \Theta(E(k))=k,\;k\in [-\pi,0],
\end{align}
where $E(k)$ is made up of the traditional band functions $E_j(k)$, $1\leq j\leq p$, $k\in [0,\pi/p]$ as follows:
since for $p-j$ odd, $E_j$ increases, while for $p-j$ even, $E_j$ decreases, we have 
\begin{align*}
    E(k)=\begin{cases}
        E_j\left(k+\pi-\frac{(j-1)\pi}{p}\right),& -\pi+\frac{(j-1)\pi}{p}\leq k< -\pi+\frac{j\pi}{p},\;p-j\;\text{odd}\\
        E_j\left(\frac{j\pi}{p}-k-\pi\right),& -\pi+\frac{(j-1)\pi}{p}\leq k< -\pi+\frac{j\pi}{p},\;p-j\;\text{even}.
    \end{cases}   
\end{align*}
 Differentiating the relation \eqref{eq:discriminantMO} yields
\begin{align}\label{eq:dkappa}
\begin{split}
\Theta'(z)
&=\frac{{\iop}\Delta'(z)}{p\sqrt{\Delta(z)^2-4}},
\quad z\in \bbC_+.
\end{split}
\end{align}
 One also has the following product formulas:
\begin{align}\label{eq:products}
\begin{split}
    &\Delta(z)^2-4=\prod_{j=1}^{p}(z-\lambda_{2j-1})(z-\lambda_{2j})\\
    &\Delta'(z)=p\prod_{j=1}^{p-1}(z-\kappa_j),
\end{split}
\end{align}
where $\kappa_1 < \cdots < \kappa_{p-1}$ denote the zeros of $\Delta'$, which satisfy $\lambda_{2j} \leq \kappa_j \leq \lambda_{2j+1}$ for each $j$. This yields
\begin{align}\label{eq:dkproduct}
\Theta'(z)
&=\sqrt{\frac{1}{(z-\lambda_1)(\lambda_{2p}-z)}\prod_{j=1}^{p-1}\frac{(z-\kappa_j)^2}{(z-\lambda_{2j})(z-\lambda_{2j+1})}},
\quad z\in\bbC_+
\end{align}
with argument chosen so that $\arg(\Theta')=\frac{\pi}{2}$ on the gap $z\in (\lambda_{2p},\infty)$. 
Finally, $\Theta$ extends continuously to $\overline{\bbC_+}$, with $\Im(\Theta(z))=0$ for $z\in \Sigma$, so that by the Schwarz reflection principle, $\Theta$ admits an analytic extension through any connected component of the interior of $\Sigma$.
We note that one may also extend $\Theta$ through the gaps, and these extensions through each gap differ by an additive real constant.\footnote{We also note that by construction, $e^{\pm {\iop} p \Theta(z)}$ are the eigenvalues of $\Phi(z)$, so the spectral radius of $\Phi(z)$ is given by $e^{p \Im \Theta(z)}$ for $z \in \bbC_+$, which in particular implies that $\Im \Theta(z) = L(z)$, the Lyapunov exponent.}

Using these facts, we may prove the following lemma. For a related continuum result for the ``global" quasimomentum, see \cite{Korotyaev1992}, using some properties of the global energy found in \cite{firsova1987direct}. While similar, both the proof and conclusion in that work differ from what is proved below both in setting and in our definition of the quasimomentum through the Marchenko--Ostrovski mapping, $\Theta$. 

\begin{lemma}\label{lem:BorgMarchenkointerior}
For $x\in \mathring{\Sigma}$,
\begin{align*}
\Theta'(x),\Theta^{(3)}(x)>0
\end{align*}
and for each connected component $[\lambda_{m},\lambda_{n}]$ of $\Sigma$, we have
\begin{align*}
&\lim_{x\to \lambda_{m}^+}\Theta''(x)
=-\infty, \lim_{x\to\lambda_{n}^-}\Theta''(x)=+\infty.
\end{align*}
Thus, $\Theta''(x)$ has a unique zero $x_*$ in each connected component of $\mathring{\Sigma}$ and $\Theta^{(3)}(x_*)\ne 0$.
\end{lemma}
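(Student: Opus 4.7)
The plan is to exploit the fact that $\Theta:\bbC_+\to\bbC_+$ is a Herglotz function — as recorded in the paper's footnote, $\Im\Theta$ equals the Lyapunov exponent $L$ — and to extract the required sign information from its Herglotz representation. First I would verify that the linear term in this representation vanishes: the relation $2\cos(p\Theta)=\Delta$ combined with $\Delta(z)\sim z^p$ as $|z|\to\infty$ yields $\Theta(z)\sim \iop\log z$, so $\Theta(\iop y)/(\iop y)\to 0$ as $y\to\infty$, and hence $\Theta$ admits the representation
$$
\Theta(z)=\alpha+\int_\bbR\Bigl(\frac{1}{t-z}-\frac{t}{1+t^2}\Bigr)\,d\mu(t),\qquad z\in\bbC_+,
$$
for some $\alpha\in\bbR$ and some positive Borel measure $\mu$ satisfying $\int(1+t^2)^{-1}\,d\mu(t)<\infty$. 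Continuity of $\Theta$ to $\overline{\bbC_+}$ together with $\Im\Theta\equiv 0$ on $\Sigma$ and $\Im\Theta>0$ on $\bbR\setminus\Sigma$ identifies $\mu$ as absolutely continuous with density $\Im\Theta(\cdot+\iop 0)/\pi$ supported precisely on $\bbR\setminus\mathring\Sigma$.

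For any $x\in\mathring\Sigma$, $x$ sits at positive distance from $\mathrm{supp}(\mu)$, so differentiation under the integral sign is legitimate and gives
$$
\Theta^{(k)}(x)=k!\int_\bbR\frac{d\mu(t)}{(t-x)^{k+1}},\qquad k\geq 1.
$$
For $k=1,3$, the exponent $k+1$ is even, hence $(t-x)^{-(k+1)}>0$ throughout $\mathrm{supp}(\mu)$, and $\mu$ assigns positive mass to every gap. This immediately yields $\Theta'(x)>0$ and $\Theta^{(3)}(x)>0$ for all $x\in\mathring\Sigma$.

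Next I would handle the one-sided limits of $\Theta''$ at band edges. For a nondegenerate edge $\lambda$ abutting a gap, the factorization \eqref{eq:products} gives $|\Delta(\lambda\pm\varepsilon)|=2+c|\varepsilon|+O(\varepsilon^2)$ for $\varepsilon$ on the gap side, with $c=|\Delta'(\lambda)|>0$. Inverting $\cosh(p\Im\Theta)=|\Delta|/2$ then produces $\Im\Theta(\lambda\pm\varepsilon+\iop 0)\sim\sqrt{c|\varepsilon|}/p$, so the density of $\mu$ behaves like a positive constant times $\sqrt{|t-\lambda|}$ near $\lambda$ on the gap side. Splitting
$$
\Theta''(x)=2\int_\bbR\frac{d\mu(t)}{(t-x)^3}
$$
into the local contribution from a small neighborhood of $\lambda_m$ (where the density is $\sim c'\sqrt{\lambda_m-t}$ for $t$ just below $\lambda_m$) and a uniformly bounded remainder, and rescaling $s=(\lambda_m-t)/(x-\lambda_m)$ in the local piece, produces a scaling $\sim -C(x-\lambda_m)^{-3/2}\to-\infty$ as $x\to\lambda_m^+$; the symmetric computation at $\lambda_n$ yields $+C(\lambda_n-x)^{-3/2}\to+\infty$.

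Finally, since $\Theta^{(3)}>0$ on each connected component of $\mathring\Sigma$, $\Theta''$ is strictly increasing there, and the boundary limits $\mp\infty$ force the existence of a unique zero $x_*\in(\lambda_m,\lambda_n)$ with $\Theta^{(3)}(x_*)>0$ automatically nonzero. The main obstacle I anticipate is the careful bookkeeping of the Herglotz setup — ruling out the linear term, confirming that $\mu$ has no singular part at the band edges, and extracting the precise $\sqrt{\,\cdot\,}$ asymptotic — but each of these reduces to a short direct calculation from \eqref{eq:discriminantMO} and \eqref{eq:products}.
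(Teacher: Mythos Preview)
Your proposal is correct and, for the positivity of $\Theta'$ and $\Theta^{(3)}$, essentially identical to the paper's argument: both use the Herglotz representation of $\Theta$, kill the linear term via the $\iop\log z$ asymptotic, identify the representing measure through Stieltjes inversion as $\pi^{-1}\Im\Theta\,\chi_{\bbR\setminus\Sigma}\,d\lambda$, and read off positivity from the even powers in the denominator.

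The two proofs diverge only in the band-edge analysis of $\Theta''$. You extract the $\sqrt{|t-\lambda|}$ asymptotic of the density from $\cosh(p\Im\Theta)=|\Delta|/2$ and then rescale the integral $2\int(t-x)^{-3}\,d\mu(t)$ to exhibit the $\pm(x-\lambda)^{-3/2}$ blow-up. The paper instead reads the edge behavior directly off the product formula \eqref{eq:dkproduct}: near an edge $\lambda_k$ of a connected component one has $\Theta'(x)=g(x)|x-\lambda_k|^{-1/2}$ with $g$ smooth and nonvanishing, so one differentiation immediately gives $|\Theta''(x)|\to\infty$, and the sign is fixed by $\Theta^{(3)}>0$. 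The paper's route is shorter and avoids the splitting/rescaling computation; your route has the virtue of staying entirely within the Herglotz framework and not invoking \eqref{eq:dkproduct}, though you do lean on \eqref{eq:products} for the $|\Delta|-2\sim c|\varepsilon|$ expansion. Either way, the conclusion and the final uniqueness argument for $x_*$ coincide.
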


\begin{proof}
Since $\Theta$ is Herglotz, we may write:

\begin{align*}
    \Theta(z)=az+b+\int\limits_{\bbR}\frac{1+\lambda z}{\lambda-z} \, \rmd \nu(\lambda)
\end{align*}
for $a>0$, $b\in \bbR$ and $\rmd \nu$ a finite positive measure on $\bbR$. Furthermore, $a$ may be computed as 
\begin{align*}
a=\lim_{y\to\infty}\frac{\Theta(\iop y)}{\iop y}=0,
\end{align*}
since $\Theta(z)=\iop\log(z)+o(1)$ as $z\to \infty$ nontangentially in $\mathbb C_+$ (cf.~\cite[Lemma~10.65]{LukicBook}). 

Taking imaginary parts of the Herglotz representation, we have 
\begin{align*}
\Im(\Theta(x+{\iop}y))
&=\int\limits_{\bbR}\frac{y(1+\lambda^2)}{(\lambda-x)^2+y^2} \, \rmd \nu(\lambda)\\
&=\int\limits_{\bbR}\frac{y}{(\lambda-x)^2+y^2} \, \rmd \mu(\lambda)
\end{align*}
for the Poisson finite measure $\rmd \mu(\lambda):=(1+\lambda^2) \, \rmd \nu(\lambda)$. Differentiating this expression in the upper half plane, we have by the Cauchy-Riemann equations
\begin{align}\label{eq:CReqs}
\frac{\partial \Re(\Theta)}{\partial x}
&=\int\limits_{\bbR}\frac{\rmd \mu(\lambda)}{(\lambda-x)^2+y^2}-2y^2\int\limits_{\bbR}\frac{\rmd \mu(\lambda)}{((\lambda-x)^2+y^2)^{2}}. 
\end{align}

Since $\Im(\Theta)$ extends continuously to $\overline{\bbC_+}$ we may compute $\rmd \mu$ by Stieltjes inversion:
\begin{align*}
    \rmd \mu(\lambda)
    =\frac1\pi \Im(\Theta(\lambda))\chi_{\bbR\setminus \Sigma}(\lambda) \, \rmd \lambda,
\end{align*}
where we have used that $\Im(\Theta(\lambda))=0$ for $\lambda\in \Sigma$. 
Since we may extend $\Theta$ analytically through the interior of a connected component of $\Sigma$,  for $x \in \mathring{\Sigma}$, we have 
\begin{align*}
\Theta'(x)=\frac1\pi \int\limits_{\bbR\setminus \Sigma}\frac{\Im(\Theta(\lambda))}{(\lambda-x)^2} \, \rmd \lambda
\end{align*}
by taking $y\downarrow 0$ in \eqref{eq:CReqs}. Differentiating twice more, we have 
\begin{align*}
    \Theta^{(3)}(x)=\frac6\pi \int\limits_{\bbR\setminus \Sigma}\frac{\Im(\Theta(\lambda))}{(\lambda-x)^4} \, \rmd \lambda.
\end{align*}
Thus, $\Theta'(x),\Theta^{(3)}(x)>0$ since
\begin{align*}
    \Im(\Theta(\lambda))>0,\;\lambda\in \bbR\setminus \Sigma,
\end{align*} 
cf.\ \cite{LukicBook}. It remains to verify the behavior at the edges. Examining boundary values of \eqref{eq:dkproduct}, for $x$ near $\lambda_k$, we may write:
\begin{align*}
\Theta'(x)=g(x)|x-\lambda_k|^{-1/2}
\end{align*}
for $g$ smooth in a neighborhood $\lambda_k$. Thus,
\begin{align*}
\Theta''(x)= g'(x)|x-\lambda_k|^{-1/2}\pm \frac12g(x)|x-\lambda_k|^{-3/2},
\end{align*}
for a sign dependent on whether $\lambda_k$ is a left or right endpoint.
So, we have $\lim_{x\to\lambda_k^\pm}|\Theta''(x)|=\infty$. Since $\Theta^{(3)}>0$, the desired limits follow.
\end{proof}

Using the previous lemma, we are able to show that for each $j$, $E_j''$ and $E_j'''$ cannot vanish simultaneously in $[0,\pi/p]$, which is the crucial input needed to apply the van der Corput lemma and estimate the oscillatory integrals later in the manuscript.
In particular, this allows us to improve the ``$p+1$''  to ``$3$'' in the main dispersive estimate.
We will exploit the inverse function $E$ from \eqref{eq:ThetaEinverse}: indeed, we note that $E''(k')=E^{(3)}(k')=0$ if and only if $E_j''(k)=E_j^{(3)}(k)=0$ for some $j$, where the derivatives at $k'\in \{ -\pi+\frac{\ell \pi}{p}\}_{\ell=0}^p$ or $k\in\{0,\pi/p\}$ are understood in the limit.

\begin{corollary}\label{cor:evalslower}
For any $V: \bbZ \to \bbR$ that is $p$-periodic, 
\begin{align}\label{eq:bandfcnmins}
  \delta(V)
  :=  \min_{1\leq j \leq p}\min_{k\in [0,\pi/p]}(|E_{V,j}''(k;p)|+|E_{V,j}^{(3)}(k;p)|) >0.
\end{align}
\end{corollary}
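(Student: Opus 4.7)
The plan is to prove the stronger pointwise statement that $|E_{V,j}''(k;p)| + |E_{V,j}^{(3)}(k;p)| > 0$ for every $j \in \{1,\ldots,p\}$ and every $k \in [0,\pi/p]$. Granted this, since each $E_j$ has an analytic extension through the endpoints $k = 0,\pi/p$, both $E_j''$ and $E_j^{(3)}$ are continuous on the compact interval $[0,\pi/p]$, so the minimum over this interval is attained and strictly positive; taking the further minimum over the finite range $1 \le j \le p$ then delivers $\delta(V) > 0$. I argue the pointwise statement by contradiction: assume $E_j''(k_0) = E_j^{(3)}(k_0) = 0$ for some $j$ and some $k_0 \in [0,\pi/p]$, and split into two cases according as $E_j(k_0) \in \mathring\Sigma$ or $E_j(k_0) \in \partial\Sigma$.

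The principal case is $E_j(k_0) \in \mathring\Sigma$: here $\Theta$ is analytic in a neighborhood of $E_j(k_0)$ with $\Theta'(E_j(k_0)) > 0$ by Lemma~\ref{lem:BorgMarchenkointerior}, and the defining identity $\Delta(E_j(k)) = 2\cos(pk)$ localizes to $\Theta(E_j(k)) = \pm k + C$ near $k_0$. Differentiating once gives $\Theta'(E_j)E_j' = \pm 1$, so $E_j'(k_0) \neq 0$; differentiating twice gives $\Theta''(E_j)(E_j')^2 + \Theta'(E_j)E_j'' = 0$, so $E_j''(k_0) = 0$ forces $\Theta''(E_j(k_0)) = 0$. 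By the lemma, this pins $E_j(k_0) = x_*$, the unique zero of $\Theta''$ in the corresponding connected component of $\Sigma$, at which $\Theta^{(3)}(x_*) > 0$. Differentiating a third time yields $\Theta^{(3)}(E_j)(E_j')^3 + 3\Theta''(E_j)E_j'E_j'' + \Theta'(E_j)E_j^{(3)} = 0$; substituting $\Theta''(x_*) = E_j''(k_0) = 0$ leaves
\begin{equation*}
E_j^{(3)}(k_0) = -\frac{\Theta^{(3)}(x_*)\,(E_j'(k_0))^3}{\Theta'(x_*)} \neq 0,
\end{equation*}
contradicting the assumption.

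The remaining possibility is $k_0 \in \{0,\pi/p\}$ and $E_j(k_0) \in \partial\Sigma$, i.e., $E_j(k_0)$ is a genuine (non-degenerate) band edge. In this regime the product formula~\eqref{eq:products} gives $\Delta'(E_j(k_0)) \neq 0$: the only way to have $\Delta'(E_j(k_0)) = 0$ at a band edge is $E_j(k_0) = \kappa_{j'}$ with $\lambda_{2j'} = \lambda_{2j'+1} = E_j(k_0)$ (a touching point), which would place $E_j(k_0) \in \mathring\Sigma$ and return us to the previous case. Differentiating $\Delta(E_j(k)) = 2\cos(pk)$ at $k_0$ now gives $\Delta'(E_j(k_0))E_j'(k_0) = -2p\sin(pk_0) = 0$, so $E_j'(k_0) = 0$, and then $\Delta'(E_j(k_0))E_j''(k_0) = -2p^2\cos(pk_0) = \mp 2p^2 \neq 0$, contradicting $E_j''(k_0) = 0$. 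The main subtlety of the argument is the clean dichotomy at the endpoints between genuine band edges and touching points; touching points are absorbed into the spectral-interior case because the factor $(z - \kappa_{j'})^2/[(z-\lambda_{2j'})(z-\lambda_{2j'+1})]$ in~\eqref{eq:dkproduct} simplifies to $1$ there, rendering $\Theta$ analytic through the touching point and thus making Lemma~\ref{lem:BorgMarchenkointerior} applicable.
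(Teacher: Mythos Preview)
Your proof is correct and follows essentially the same route as the paper's: both split into the interior case (handled via the inverse relation between $\Theta$ and $E_j$ together with Lemma~\ref{lem:BorgMarchenkointerior}) and the open-gap edge case (handled by differentiating $\Delta(E_j(k))=2\cos(pk)$ twice to obtain $E_j''(k_*)=\pm 2p^2/\Delta'(E_j(k_*))\neq 0$). Your write-up is a bit more explicit about the touching-point/genuine-edge dichotomy, whereas the paper absorbs touching points into the interior case implicitly by working with the global inverse $E$ of $\Theta$, but the substance is identical.
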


\begin{proof}
We note that for $k$ such that $E(k)\in \mathring{\Sigma}$, differentiating the relation $\Theta(E(k)) = k$ and rearranging yields
\begin{align*}
E''(k)&=\frac{-\Theta''(E(k))}{(\Theta'(E(k)))^3}\\
E^{(3)}(k)&=\frac{1}{(\Theta'(E(k)))^5}(3[\Theta''(E(k))]^2-\Theta^{(3)}(E(k))\Theta'(E(k)))
\end{align*}
so that $E''(k)=E^{(3)}(k)=0\iff \Theta''(E(k))=\Theta^{(3)}(E(k))=0$. By Lemma~\ref{lem:BorgMarchenkointerior}, this cannot occur away from an open gap, so it remains to check at band edges near an open gap. However, at an open gap, $\Delta'\ne 0$ and we may differentiate the relation $\Delta(E_j(k)) = 2\cos (pk)$, rearrange, and evaluate for $k_*\in\{0,\pi/p\}$ to obtain
\begin{align}\nonumber
E_j''(k_*)&=\frac{2p\sin(pk_*)\Delta''(E_j(k_*))E_j'(k_*)-2p^2\cos(pk_*)\Delta'(E_j(k_*))}{\Delta'(E_j(k_*))^2}\\
\label{eq:ejdoubleprime}
&=\frac{\pm 2p^2}{\Delta'(E_j(k_*))}\ne 0.
\end{align}
Consequently, \eqref{eq:bandfcnmins} follows by continuity of each band function $E_j$ and its derivatives on the interval $[0,\pi/p]$. 
\end{proof}

\section{Optimal Dispersion for Periodic Operators} \label{sec:PerDisp}
Let us begin by recalling the decomposition of $H$ as a direct integral by means of the $p$-Fourier transform.
Put $\bbB = [-\pi/p,\pi/p)$ and define an isometry 
\begin{align*}
    \cF_p:\ell^2(\bbZ)\to L^2(\bbB;\bbC^p),
\end{align*}
initially for $\psi \in \ell^1(\bbZ)$ by
\begin{align*}
    \sum_{r\in \bbZ}\psi_{q+rp}e^{-{\iop}r p k}
    =\langle (\mathcal F_p\psi)(k),e_q\rangle =: [\hat{\psi}(k)]_q, \quad k\in \bbB,1\leq q\leq p,
\end{align*}
 and then extended to $\ell^2(\bbZ)$.
Then, we have the direct integral representation
\begin{align*}
    H=\cF_p^{-1}\int\limits_{\bbB}^\oplus H(k)\frac{\rmd k}{| \bbB|}\cF_p, 
\end{align*}
where
$
    H(k):\bbC^p\to \bbC^p
$
is as in \eqref{eq:HkDef}. 
We may write
\begin{align*}
    H(k)=\sum_{j=1}^pE_j(k)P_j(k)
\end{align*}
for orthogonal projections $P_j$ having one-dimensional range except possibly when $k\in\{-\pi/p,0\}$. 

We have the following elementary pointwise representation for the propagation of $\psi\in \ell^1$.
We give the proof to keep the paper self-contained.

\begin{lemma}\label{lem:propagator}
Let $\psi\in \ell^1(\bbZ)$, and let $n=m+\ell p$ for some $1\leq m\leq p$ and $\ell\in\bbZ$. We have
\begin{align*}
    \langle e^{-{\iop}tH}\psi,\delta_n\rangle
    =\sum_{q=1}^p \sum_{r\in \bbZ}\psi_{q+rp}\int\limits_{ \bbB}\sum_{j=1}^pe^{-{\iop}(tE_j(k)+p(r-\ell) k)}[v_j(k)]_m \overline{[v_j(k)]_q} \, \frac{\rmd k}{| \bbB|}.
\end{align*}
\end{lemma}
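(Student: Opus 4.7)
The proof is essentially a direct computation using the direct integral decomposition of $H$ via $\cF_p$, so the plan is to unfold the definitions and carefully track the inner products.

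First I would use the fact that $\cF_p$ is an isometry to write
\begin{align*}
\langle e^{-{\iop}tH}\psi,\delta_n\rangle
= \int_{\bbB}\langle e^{-{\iop}tH(k)}\hat\psi(k), \widehat{\delta_n}(k)\rangle_{\bbC^p}\,\frac{\rmd k}{|\bbB|}.
\end{align*}
The vector $\widehat{\delta_n}(k)$ is trivial to compute: for $n=m+\ell p$, the definition of $\cF_p$ gives $[\widehat{\delta_n}(k)]_q = \delta_{q,m}e^{-{\iop}\ell p k}$, so $\overline{[\widehat{\delta_n}(k)]_q} = \delta_{q,m}e^{{\iop}\ell p k}$. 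Substituting the definition of $\hat\psi(k)$ for the other slot presents no issue.

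Next I would expand $e^{-{\iop}tH(k)}$ in the spectral basis. Outside the (measure-zero) set of $k \in \bbB$ where eigenvalues cross, the $\{v_j(k)\}$ form an orthonormal basis of $\bbC^p$ and
\begin{align*}
e^{-{\iop}tH(k)} = \sum_{j=1}^p e^{-{\iop}tE_j(k)}\, v_j(k)\langle\,\cdot\,, v_j(k)\rangle,
\end{align*}
which gives
\begin{align*}
\langle e^{-{\iop}tH(k)}\hat\psi(k), \widehat{\delta_n}(k)\rangle
= \sum_{j=1}^p e^{-{\iop}tE_j(k)} \langle \hat\psi(k), v_j(k)\rangle\,[v_j(k)]_m\, e^{{\iop}\ell p k}.
\end{align*}
Writing out $\langle \hat\psi(k), v_j(k)\rangle = \sum_q \sum_r \psi_{q+rp} e^{-{\iop}prk}\overline{[v_j(k)]_q}$ and combining the two exponentials in $k$ yields exactly the integrand in the claim, with the phase $e^{-{\iop}p(r-\ell)k}$ arising from $e^{-{\iop}prk}\cdot e^{{\iop}\ell p k}$.

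The only remaining issue is the interchange of the $\sum_{q,r}$ and $\int_\bbB$, which is justified by Fubini since $\psi \in \ell^1(\bbZ)$ and the eigenvector entries are bounded by $1$ in modulus, making the overall sum absolutely convergent. The measure-zero set on which the spectral expansion of $H(k)$ needs care does not affect the integral. I do not anticipate any real obstacle; the bookkeeping of complex conjugates (given that the paper's inner product is antilinear in the second slot) is the only thing requiring attention.
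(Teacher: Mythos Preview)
Your proposal is correct and follows essentially the same route as the paper: both use the isometry of $\cF_p$ to pass to $\int_\bbB \langle e^{-{\iop}tH(k)}\hat\psi(k),\hat\delta_n(k)\rangle\,\frac{\rmd k}{|\bbB|}$, compute $\hat\delta_n(k)=e^{-{\iop}\ell p k}e_m$, expand $e^{-{\iop}tH(k)}$ in the eigenbasis $\{v_j(k)\}$, and then interchange $\sum_{q,r}$ with $\int_\bbB$ (the paper cites dominated convergence, you cite Fubini, which amounts to the same thing here). The bookkeeping of conjugates and phases matches exactly.
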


\begin{proof}
We may write
\begin{align*}
    e^{-{\iop}tH}\psi 
    & =\mathcal F_p^{-1}\int\limits_{ \bbB}^\oplus e^{-{\iop}tH(k)}\hat{\psi}(k) \, \frac{\rmd k}{| \bbB|} \\
   & =\mathcal F_p^{-1}\int\limits_{ \bbB}^\oplus\sum_{j=1}^pe^{-{\iop}tE_j(k)}\langle \hat{\psi}(k),v_j(k)\rangle v_j(k)  \, \frac{\rmd k}{| \bbB|}
\end{align*}
since away from $k\in \{0,\pi/p\}$, the projections have one-dimensional image and hence can be written
\begin{align} \label{eq:PjkDef}
P_j(k)\hat\psi=\langle\hat \psi(k), v_j(k)\rangle v_j(k)    .
\end{align}

So, we may use unitarity of the $\mathcal F_p$ transform to compute 
\begin{align*}
    \langle e^{-{\iop}tH}\psi,\delta_n\rangle
    &=\left \langle\; \int\limits_{ \bbB}^\oplus e^{-{\iop}tH(k)}\hat{\psi}(k)\frac{\rmd k}{| \bbB|}, \hat{\delta}_n(k)\right \rangle\\
    &= \int\limits_{ \bbB}\sum_{j=1}^pe^{-{\iop}tE_j(k)}\langle v_j(k),\hat{\delta}_n(k)\rangle \langle \hat{\psi}(k),v_j(k)\rangle \frac{\rmd k}{| \bbB|}.
\end{align*}
Writing $n=m+\ell p$ for an $\ell\in \bbZ$ and $1\leq m\leq p$, we have $\hat{\delta}_n = e^{-{\iop}\ell pk}e_m$ by definition. 
Since $\langle v_j(k),e_m\rangle =[v_j(k)]_m$, we have
\begin{align*}
    \langle e^{-{\iop}tH}\psi,\delta_n\rangle
    &= \int\limits_{ \bbB}\sum_{j=1}^pe^{-{\iop}(tE_j(k)-p\ell k)}[v_j(k)]_m \langle \hat{\psi}(k),v_j(k)\rangle \, \frac{\rmd k}{| \bbB|}\\
    &= \int\limits_{ \bbB}\sum_{j=1}^pe^{-{\iop}(tE_j(k)-p\ell k)}[v_j(k)]_m\sum_{q=1}^p[\hat{\psi}(k)]_q \overline{[v_j(k)]_q} \, \frac{\rmd k}{| \bbB|}\\
    &=\sum_{q=1}^p \sum_{r\in \bbZ}\psi_{q+rp} \int\limits_{ \bbB}\sum_{j=1}^pe^{-{\iop}(tE_j(k)+p(r-\ell) k)}[v_j(k)]_m \overline{[v_j(k)]_q} \, \frac{\rmd k}{| \bbB|}
\end{align*}
by the dominated convergence theorem. 
\end{proof}

\begin{proof}[Proof of Theorem~\ref{thm:periodicdispersion}]
Write $n = m + \ell p$ as in Lemma~\ref{lem:propagator}.
Using the expression given by the lemma, we have 
\begin{align*}
 &   |\langle e^{-{\iop}tH}\psi,\delta_n\rangle| \\
 & \qquad \leq 
    \sum_{q=1}^p \sum_{r\in \bbZ}|\psi_{q+rp}| \sum_{j=1}^p\left|\;\int\limits_{ \bbB}e^{-{\iop}(tE_j(k)+p(r-\ell) k)}[v_j(k)]_m \overline{[v_j(k)]_q} \, \frac{\rmd k}{| \bbB|}\right |.
\end{align*}
Using Corollary~\ref{cor:evalslower} and $E_j(k)=E_j(-k)$, we may decompose for each $1\leq j\leq p$, 
\begin{align*}
     \bbB
     =\{k\in  \bbB: |E_j^{(3)}(k)|\geq \delta/2\}\cup \{k\in  \bbB: |E_j^{(2)}(k)|\geq \delta/2\},
\end{align*}
so we define
\begin{equation}
K_j^2 = \{k\in  \bbB: |E_j^{(2)}(k)|\geq \delta/2\}, \quad
K_j^3 = \bbB \setminus K_j^2,
\end{equation}
where we assume $\delta<1$ for convenience in what follows. 
Let $t$ be given with $|t|\geq 1$. Due to \cite[Lemma~3]{Eliasson1997Acta}, each $K_j^i$ has finitely many components.
We denote this number by $\# K_j^i$, and define 
\begin{align*}
    C_{V,j}
    :=\sum_{s=1}^{\# K_j^2+\# K_j^3 } |[v_j(k_s)]_m \overline{[v_j(k_s)]_q}| +   \int\limits_{ \bbB}\left|\frac{\rmd}{\rmd k}\left( [v_j(k)]_m \overline{[v_j(k)]_q} \right) \right| \, \rmd k
\end{align*}
where $\{k_s\}$ denotes the collection of all left endpoints of the intervals constituting $K_j^2$ and $K_j^3$.

Then, we split the integrals above and estimate using the van der Corput lemma (compare Lemma~\ref{lem:VanderCorput}) to find 
\begin{align*}
&\left|\;\int\limits_{ \bbB}e^{-{\iop}(tE_j(k)+p(r-\ell) k)}[v_j(k)]_m \overline{[v_j(k)]_q} \, \frac{\rmd k}{| \bbB|}\right | \\
& \qquad \leq \sum_{i\in \{2,3\}} \frac{p}{2\pi}\left|\;\int\limits_{K_{j}^i}e^{-{\iop}(tE_j(k)+p(r-\ell) k)}[v_j(k)]_m \overline{[v_j(k)]_q} \, \rmd k\right |\\
& \qquad \leq \frac{p C_3}{2\pi }C_{V,j}((\delta/2)^{-1/2}|t|^{-1/3}) \\[2mm]
& \qquad \leq \frac{p C_3}{2^{1/3}\pi }C_{V,j}(\delta^{-1/2}\langle t\rangle ^{-1/3}),
\end{align*}
where we note that $C_3 = 18>8=C_2$ from Lemma~\ref{lem:VanderCorput} and have used $\delta<1$ and $|t|\geq 1$, so that $\sqrt{2}|t|\geq \langle t\rangle$.

We may further estimate $\sum_{j=1}^pC_{V,j}$ by noting that for all $k\in \bbB$, the $v_j(k)$ form an orthonormal basis, and for each 
\begin{align*}
1\leq s\leq \# K_j^2+\# K_j^3,
\end{align*}
we have 
\begin{align*}
\sum_{j=1}^p|[v_j(k_s)]_m [v_j(k_s)]_q|\leq \frac12\sum_{j=1}^p (|\langle v_j(k_s),e_m\rangle|^2+|\langle v_j(k_s),e_q\rangle|^2)=1.
\end{align*}
Similarly,
\begin{align*}
\int\limits_{ \bbB}\left|\frac{\rmd}{\rmd k}\left( [v_j(k)]_m \overline{[v_j(k)]_q} \right) \right|\, \rmd k
&\leq \frac12 \int\limits_{ \bbB}(|\langle v_j'(k),e_m\rangle|^2+|\langle v_j'(k),e_q\rangle|^2 \, \rmd k \\
&\qquad + \frac12\int\limits_{\bbB} |\langle v_j(k),e_q\rangle|^2+|\langle v_j(k),e_m\rangle|^2) \, \rmd k\\
&\leq \int\limits_{ \bbB}\| v_j'(k)\|^2\, \rmd k+\frac12\int\limits_{\bbB} |\langle v_j(k),e_q\rangle|^2+|\langle v_j(k),e_m\rangle|^2\, \rmd k
\end{align*}
by the Cauchy--Schwarz inequality. Thus, 
\begin{align*}
&\sum_{j=1}^p\int\limits_{ \bbB}\left|\frac{\rmd}{\rmd k}\left( [v_j(k)]_m \overline{[v_j(k)]_q} \right) \right|\, \rmd k \\
& \qquad\leq \sum_{j=1}^p\int\limits_{ \bbB}\| v_j'(k)\|^2\, \rmd k+\sum_{j=1}^p\frac12\int\limits_{\bbB} |\langle v_j(k),e_q\rangle|^2+|\langle v_j(k),e_m\rangle|^2\, \rmd k\\
&\qquad=\sum_{j=1}^p\int\limits_{ \bbB}\| v_j'(k)\|^2\, \rmd k+\frac{2\pi}{p}\\
&\qquad \leq p\max_{1\leq j\leq p}\int\limits_{ \bbB}\| v_j'(k)\|^2\, \rmd k+2\pi/p,
\end{align*}
using again that the $v_j$ form an orthonormal basis for every $k \neq -\pi/p, 0$.

So, putting these estimates together, we have  
\begin{align*}
\sum_{j=1}^p C_{V,j}\leq \max_{1\leq j\leq p} (\#K_j^{3}+\#K_j^2)\left(1+ p\max_{1\leq j\leq p}\int\limits_{ \bbB}\| v_j'(k)\|^2\, \rmd k+2\pi/p \right)=:C_V.
\end{align*}

Thus, noting that 
\begin{align*}
    \sum_{q=1}^p \sum_{r\in \bbZ}|\psi_{q+rp}|=\|\psi\|_1
\end{align*}
yields \eqref{eq:dispersiveestimate} for $|t|\geq 1$ with constant
\begin{align*}
    \frac{p C_3}{2^{1/3}\pi }C_{V}\delta^{-1/2}.
\end{align*}

To deal with $|t| < 1$, note that
\begin{align*}
 \| e^{-{\iop}tH}\psi\|_\infty
 \leq \| e^{-{\iop}tH}\psi\|_2
= \|\psi\|_2
 \leq \|\psi\|_1
 \leq 2^{1/6} \langle t \rangle^{-1/3} \|\psi\|_1,
 \end{align*}
 since $\langle t \rangle < \langle 1 \rangle =2^{1/2}$ for $|t| < 1$.

Thus, with
\begin{align} \label{eq:dispersiveConstantMVP}
    M=M_V=\max\left\{     \frac{p C_3}{2^{1/3}\pi }C_{V}\delta^{-1/2}, 2^{1/6} \right\},
\end{align}
the desired estimate is proved.
\end{proof}

\begin{appendix}
\section{Van der Corput Lemma}
We need the following modified van der Corput lemma, proved in much the same way as the standard form, cf.  \cite{stein1993harmonic}.

\begin{lemma}\label{lem:VanderCorput}
      Suppose $\phi\in C^k(a,b)$ is real-valued with $|\phi^{(k)}(x)|\geq \delta>0$ for $k\geq 2$ on $(a,b)$, while $\eta$ is real valued with $\eta''(x)\equiv 0$ on $(a,b)$. Then, for $\psi\in C^1(a,b)\cap C([a,b])$, and $\lambda\ne 0$
    \begin{align}\label{eq:vandercorput}
        \left|\int\limits_{a}^b e^{{\iop}(\lambda\phi(x)+\eta(x))}\psi(x) \, \rmd x \right|\leq 
        C_k\delta^{-1/k}\left(|\psi(a)|+\int\limits_a^b |\psi'(x)|\, \rmd x\right)|\lambda|^{-1/k}
    \end{align}
    where
    \begin{align*}
        C_k=5\cdot 2^{k-1}-2.
    \end{align*}
\end{lemma}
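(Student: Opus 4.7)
The plan is to absorb the linear phase $\eta$ into the main phase so as to reduce to the standard van der Corput lemma. Because $\eta''\equiv 0$ on $(a,b)$, $\eta$ is affine there, and setting $\Phi(x):=\lambda\phi(x)+\eta(x)$ we obtain $\Phi\in C^k(a,b)$ with
\[
|\Phi^{(k)}(x)| \;=\; |\lambda|\,|\phi^{(k)}(x)| \;\geq\; |\lambda|\delta, \qquad x\in (a,b),
\]
since $\eta$ contributes nothing to derivatives of order $\geq 2$. The left-hand side of \eqref{eq:vandercorput} coincides with $\bigl|\int_a^b e^{\iop\Phi(x)}\psi(x)\,\rmd x\bigr|$, so it suffices to prove the parameter-free bound
\[
\left|\int_a^b e^{\iop\Phi(x)}\psi(x)\,\rmd x\right| \;\leq\; C_k M^{-1/k}\left(|\psi(a)| + \int_a^b |\psi'|\,\rmd x\right)
\]
whenever $|\Phi^{(k)}|\geq M$ on $(a,b)$; specializing to $M = |\lambda|\delta$ then recovers \eqref{eq:vandercorput} exactly.

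To establish this parameter-free bound I would argue by induction on $k\geq 2$. In the base case $k=2$, the hypothesis $|\Phi''|\geq M$ forces $\Phi'$ to be strictly monotonic, so the sublevel set $S_\rho := \{x\in (a,b): |\Phi'(x)|<\rho\}$ is a subinterval of length at most $2\rho/M$. On $S_\rho$ I would use the trivial absolute bound $\int_{S_\rho}|\psi| \leq (2\rho/M)\sup_{[a,b]}|\psi|$, itself controlled by $|\psi(a)|+\int|\psi'|$. On the complement I would integrate by parts by writing $e^{\iop\Phi} = (\iop\Phi')^{-1}(e^{\iop\Phi})'$; the boundary term and the integral involving $(\psi/\Phi')'$ are estimated using that $1/\Phi'$ is monotone with $|1/\Phi'|\leq \rho^{-1}$. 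Optimizing in $\rho$ produces $M^{-1/2}$-decay with $C_2=8$.

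For the inductive step $k\to k+1$, the bound $|\Phi^{(k+1)}|\geq M$ ensures that $\Phi^{(k)}$ is monotonic, so it vanishes at most once in $[a,b]$, say at $x_0$. Excising $(x_0-\rho,x_0+\rho)\cap (a,b)$, the mean value theorem gives $|\Phi^{(k)}|\geq M\rho$ on each of the (at most two) surviving subintervals; on these I would apply the inductive hypothesis, yielding a bound of order $(M\rho)^{-1/k}$ per piece, while on the excised interval a trivial bound contributes a term of order $\rho$ times the amplitude $|\psi(a)|+\int|\psi'|$. Balancing by taking $\rho\sim M^{-1/(k+1)}$ delivers the correct $M^{-1/(k+1)}$-decay and the recursion $C_{k+1}=2C_k+3\cdot 2^{k}$ or the like, tuned to produce $C_{k+1} = 5\cdot 2^{k}-2$.

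The main obstacle I anticipate is the bookkeeping of the amplitude factor $|\psi(a)|+\int|\psi'|$ during the inductive step: when passing to subintervals, one must verify that the endpoint values appearing after excision do not inflate this quantity, which is handled via the fundamental theorem of calculus ($|\psi(c)|\leq |\psi(a)|+\int_a^b|\psi'|$ for any $c\in [a,b]$), together with disjointness of the subintervals so that the $\int|\psi'|$-contributions sum to at most the global one. This careful tracking is what is needed to land on the precise constant $C_k=5\cdot 2^{k-1}-2$ rather than something weaker; the qualitative decay rate itself is immediate from the standard induction scheme.
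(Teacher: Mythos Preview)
Your approach is correct and essentially the same as the paper's: both absorb the affine $\eta$ into the phase and reduce to the standard van der Corput lemma. The only cosmetic difference is that the paper keeps the parameter $\lambda$ explicit by writing $p_\lambda(x)=\phi(x)+\tfrac{1}{\lambda}\eta(x)$, so that $|p_\lambda^{(k)}|=|\phi^{(k)}|\geq\delta$ and the classical statement from Stein applies verbatim, whereas you set $\Phi=\lambda\phi+\eta$ and work with the parameter-free form $|\Phi^{(k)}|\geq|\lambda|\delta$; the paper then simply cites Stein for the remaining induction that you sketch out.
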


\begin{proof}
Defining the phase $p_\lambda(x)=\phi(x)+\frac1\lambda \eta(x)$, and rewriting the left hand side of \eqref{eq:vandercorput}, 
\begin{align*}
  \left|\int\limits_{a}^b e^{{\iop}\lambda p_\lambda(x)} \psi(x)\, \rmd x \right|
\end{align*}
the proof follows as in the standard case since $|p_\lambda^{(k)}(x)|=|\phi^{(k)}(x)|\geq \delta$ for $k\geq 2$.
\end{proof}

\end{appendix}

\bibliographystyle{abbrv}
\bibliography{lit}

\providecommand{\MR}[1]{}
\begin{thebibliography}{10}

\bibitem{amrein1973characterization}
W.~Amrein and V.~Georgescu.
\newblock Characterization of bound states and scattering states in quantum
  mechanics.
\newblock Technical report, Univ., Geneva, 1973.

\bibitem{AschKnauf1998Nonlin}
J.~Asch and A.~Knauf.
\newblock Motion in periodic potentials.
\newblock {\em Nonlinearity}, 11(1):175--200, jan 1998.

\bibitem{BambusiZhao2020Advances}
D.~Bambusi and Z.~Zhao.
\newblock Dispersive estimate for quasi-periodic {S}chr{\"o}dinger operators on
  1-d lattices.
\newblock {\em Adv. Math.}, 366:107071, 2020.

\bibitem{BoutetSabri2023OTAA}
A.~Boutet~de Monvel and M.~Sabri.
\newblock Ballistic transport in periodic and random media.
\newblock In {\em From Complex Analysis to Operator Theory---a Panorama},
  volume 291 of {\em Oper. Theory Adv. Appl.}, pages 163--216.
  Birkh\"auser/Springer, Cham, 2023.

\bibitem{cataliotti2001josephson}
F.~S. Cataliotti, S.~Burger, C.~Fort, P.~Maddaloni, F.~Minardi, A.~Trombettoni,
  A.~Smerzi, and M.~Inguscio.
\newblock Josephson junction arrays with {B}ose-{E}instein condensates.
\newblock {\em Science}, 293(5531):843--846, 2001.

\bibitem{cataliotti2003superfluid}
F.~S. Cataliotti, L.~Fallani, F.~Ferlaino, C.~Fort, P.~Maddaloni, and
  M.~Inguscio.
\newblock Superfluid current disruption in a chain of weakly coupled
  {B}ose--{E}instein condensates.
\newblock {\em New J. Physics}, 5(1):71, 2003.

\bibitem{cuccagna2008dispersion}
S.~Cuccagna.
\newblock Dispersion for {S}chr{\"o}dinger equation with periodic potential in
  1d.
\newblock {\em Commun. Partial. Differ. Equ.}, 33(11):2064--2095, 2008.

\bibitem{CFKS}
H.~L. Cycon, R.~G. Froese, W.~Kirsch, and B.~Simon.
\newblock {\em {S}chr\"odinger Operators with Application to Quantum Mechanics
  and Global Geometry}.
\newblock Texts and Monographs in Physics. Springer-Verlag, Berlin, study
  edition, 1987.

\bibitem{DF2022ESO1}
D.~Damanik and J.~Fillman.
\newblock {\em One-Dimensional Ergodic {S}chr{\"o}dinger Operators: I. General
  Theory}, volume 221 of {\em Graduate Studies in Mathematics}.
\newblock American Mathematical Society, Providence, RI, 2022.

\bibitem{DF2024ESO2}
D.~Damanik and J.~Fillman.
\newblock {\em One-Dimensional Ergodic {S}chr\"odinger Operators---{II}.
  {S}pecific Classes}, volume 249 of {\em Graduate Studies in Mathematics}.
\newblock American Mathematical Society, Providence, RI, 2024.

\bibitem{DLY2015CMP}
D.~Damanik, M.~Lukic, and W.~Yessen.
\newblock Quantum dynamics of periodic and limit-periodic {J}acobi and block
  {J}acobi matrices with applications to some quantum many body problems.
\newblock {\em Comm. Math. Phys.}, 337:1535--1561, 2015.

\bibitem{DamMalYou2024JST}
D.~Damanik, T.~Malinovitch, and G.~Young.
\newblock What is ballistic transport?
\newblock {\em J. Spectr. Theory}, Oct. 2024.

\bibitem{efremidis2002discrete}
N.~K. Efremidis, S.~Sears, D.~N. Christodoulides, J.~W. Fleischer, and
  M.~Segev.
\newblock Discrete solitons in photorefractive optically induced photonic
  lattices.
\newblock {\em Phys. Rev. E}, 66(4):046602, 2002.

\bibitem{Egorova1987}
I.~E. Egorova.
\newblock Spectral analysis of {J}acobi limit-periodic matrices.
\newblock {\em Dokl. Akad. Nauk Ukrain. SSR Ser. A}, (3):7--9, 85, 1987.

\bibitem{eisenberg2002optical}
H.~Eisenberg, R.~Morandotti, Y.~Silberberg, J.~Arnold, G.~Pennelli, and
  J.~Aitchison.
\newblock Optical discrete solitons in waveguide arrays. i. soliton formation.
\newblock {\em J. Opt. Soc. Am. B}, 19(12):2938--2944, 2002.

\bibitem{Eliasson1997Acta}
L.~H. Eliasson.
\newblock Discrete one-dimensional quasi-periodic {S}chr\"odinger operators
  with pure point spectrum.
\newblock {\em Acta Math.}, 179(2):153--196, 1997.

\bibitem{enss1978asymptotic}
V.~Enss.
\newblock Asymptotic completeness for quantum mechanical potential scattering:
  I. short range potentials.
\newblock {\em Comm. Math. Phys.}, 61(3):285--291, 1978.

\bibitem{Fillman2017CMP}
J.~Fillman.
\newblock Ballistic transport for limit-periodic {J}acobi matrices with
  applications to quantum many-body problems.
\newblock {\em Comm. Math. Phys.}, 350:1275–1297, 2017.

\bibitem{Fillman2021OTAA}
J.~Fillman.
\newblock Ballistic transport for periodic {J}acobi operators on {$\Bbb Z^d$}.
\newblock In {\em From Operator Theory to Orthogonal Polynomials,
  Combinatorics, and Number Theory---a Volume in Honor of {L}ance
  {L}ittlejohn's 70th Birthday}, volume 285 of {\em Oper. Theory Adv. Appl.},
  pages 57--68. Birkh\"auser/Springer, Cham, 2021.

\bibitem{firsova1987direct}
N.~E. Firsova.
\newblock The direct and inverse scattering problems for the one-dimensional
  perturbed {H}ill operator.
\newblock {\em Mat. Sb.}, 58(2):351, 1987.

\bibitem{GeKachkovskiy2023CPAM}
L.~Ge and I.~Kachkovskiy.
\newblock Ballistic transport for one-dimensional quasiperiodic {S}chr\"odinger
  operators.
\newblock {\em Comm. Pure Appl. Math.}, 76(10):2577--2612, 2023.

\bibitem{Kachkovskiy2016CMP}
I.~Kachkovskiy.
\newblock On transport properties of isotropic quasiperiodic {XY} spin chains.
\newblock {\em Comm. Math. Phys.}, 345:659--673, 2016.

\bibitem{KSWpreprint}
R.~Kassem, A.~Sagiv, and M.~Weinstein.
\newblock Dispersive decay estimates for periodic {J}acobi operators on the
  half-line, 2025.
\newblock preprint.

\bibitem{Kato}
T.~Kato.
\newblock {\em Perturbation Theory for Linear Operators}.
\newblock Classics in Mathematics. Springer, Berlin, Heidelberg, 1995.

\bibitem{kevrekidis2001discrete}
P.~Kevrekidis, K.~Rasmussen, and A.~Bishop.
\newblock The discrete nonlinear {S}chr{\"o}dinger equation: a survey of recent
  results.
\newblock {\em International Journal of Modern Physics B}, 15(21):2833--2900,
  2001.

\bibitem{Korotyaev1992}
E.~L. Korotyaev.
\newblock Some properties of the quasimomentum of the one-dimensional hill
  operator.
\newblock {\em J. Sov. Math.}, 62(6):3081--3087, 1992.

\bibitem{Last1996JFA}
Y.~Last.
\newblock Quantum dynamics and decompositions of singular continuous spectra.
\newblock {\em J. Funct. Anal.}, 154:406--445, 1996.

\bibitem{LukicBook}
M.~Luki{\'c}.
\newblock {\em A First Course in Spectral Theory}, volume 226 of {\em Graduate
  Studies in Mathematics}.
\newblock American Mathematical Society, 2022.

\bibitem{MZ2020}
Y.~Mi and Z.~Zhao.
\newblock Dispersive estimate for two-periodic discrete one-dimensional
  {S}chrödinger operator.
\newblock {\em J. Math. Anal. Appl.}, 485(1):123768, 2020.

\bibitem{MZ2022}
Y.~Mi and Z.~Zhao.
\newblock Dispersive estimates for periodic discrete one-dimensional
  {S}chr{\"o}dinger operators.
\newblock {\em Proc. Am. Math. Soc}, 150(1):267--277, 2022.

\bibitem{mielke2010dispersive}
A.~Mielke and C.~Patz.
\newblock Dispersive stability of infinite-dimensional {H}amiltonian systems on
  lattices.
\newblock {\em Appl. Anal.}, 89(9):1493--1512, 2010.

\bibitem{PasturTkachenko1}
L.~Pastur and V.~Tkachenko.
\newblock On the spectral theory of the one-dimensional {S}chr\"odinger
  operator with limit-periodic potential ({R}ussian).
\newblock {\em Dokl. Akad. Nauk SSSR}, 279:1050–1053, 1984.

\bibitem{PasturTkachenko2}
L.~Pastur and V.~Tkachenko.
\newblock Spectral theory of a class of one-dimensional {S}chro\"dinger
  operators with limit-periodic potentials ({R}ussian).
\newblock {\em Trudy Moskov. Mat. Obshch.}, 51:114–168, 1988.

\bibitem{pelinovsky2008spectral}
D.~E. Pelinovsky and A.~Stefanov.
\newblock On the spectral theory and dispersive estimates for a discrete
  schr{\"o}dinger equation in one dimension.
\newblock {\em J. Math. Phys.}, 49(11), 2008.

\bibitem{peschel2002optical}
U.~Peschel, R.~Morandotti, J.~M. Arnold, J.~S. Aitchison, H.~S. Eisenberg,
  Y.~Silberberg, T.~Pertsch, and F.~Lederer.
\newblock Optical discrete solitons in waveguide arrays. 2. dynamic properties.
\newblock {\em J. Opt. Soc. Am. B}, 19(11):2637--2644, 2002.

\bibitem{ruelle1969remark}
D.~Ruelle.
\newblock A remark on bound states in potential-scattering theory.
\newblock {\em Il Nuovo Cimento A (1965-1970)}, 61(4):655--662, 1969.

\bibitem{schlag}
W.~Schlag.
\newblock Dispersive estimates for schrödinger operators: A survey.
\newblock {\em Ann. Math. Stud.}, 02 2005.

\bibitem{Simon1990CMP}
B.~Simon.
\newblock Absence of ballistic motion.
\newblock {\em Comm. Math. Phys.}, 149:209--212, 1990.

\bibitem{Simon2011Szego}
B.~Simon.
\newblock {\em Szeg\H o's theorem and its descendants}.
\newblock M. B. Porter Lectures. Princeton University Press, Princeton, NJ,
  2011.
\newblock Spectral theory for $L^2$ perturbations of orthogonal polynomials.

\bibitem{stefanov2005asymptotic}
A.~Stefanov and P.~G. Kevrekidis.
\newblock Asymptotic behaviour of small solutions for the discrete nonlinear
  {S}chr{\"o}dinger and {K}lein--{G}ordon equations.
\newblock {\em Nonlinearity}, 18(4):1841, 2005.

\bibitem{stein1993harmonic}
E.~M. Stein and T.~S. Murphy.
\newblock {\em Harmonic analysis: real-variable methods, orthogonality, and
  oscillatory integrals}, volume~3.
\newblock Princeton University Press, 1993.

\bibitem{sukhorukov2003spatial}
A.~A. Sukhorukov, Y.~S. Kivshar, H.~S. Eisenberg, and Y.~Silberberg.
\newblock Spatial optical solitons in waveguide arrays.
\newblock {\em IEEE J. Quantum Electron.}, 39(1):31--50, 2003.

\bibitem{Teschl2000Jacobi}
G.~Teschl.
\newblock {\em Jacobi operators and completely integrable nonlinear lattices},
  volume~72 of {\em Mathematical Surveys and Monographs}.
\newblock American Mathematical Society, Providence, RI, 2000.

\bibitem{Young2023JST}
G.~Young.
\newblock Ballistic transport for limit-periodic {S}chr\"odinger operators in
  one dimension.
\newblock {\em J. Spectr. Theory}, 13(2):451--489, 2023.

\bibitem{ZhaoZhang2017CMP}
Z.~Zhang and Z.~Zhao.
\newblock Ballistic transport and absolute continuity of one-frequency
  {S}chrödinger operators.
\newblock {\em Comm. Math. Phys.}, 351:877--921, 2017.

\bibitem{Zhao2016CMP}
Z.~Zhao.
\newblock Ballistic motion in one-dimensional quasi-periodic discrete
  {S}chr\"odinger equation.
\newblock {\em Comm. Math. Phys.}, 347:511--549, 2016.

\bibitem{Zhao2017JDE}
Z.~Zhao.
\newblock Ballistic transport in one-dimensional quasi-periodic continuous
  {S}chrödinger equation.
\newblock {\em J. Differ. Equ.}, 262:4523--4566, 2017.

\end{thebibliography}

\end{document}